\newtheorem{theorem}{Theorem}
\newtheorem{thm}{Theorem}
\newtheorem{rem}{Remark}
\newtheorem{corollary}{Corollary}
\newtheorem{lemma}{Lemma}
\newtheorem{lem}{Lemma}
\newtheorem{conj}{Conjecture}
\newtheorem{prob}{Problem}
\theoremstyle{definition}
\newtheorem{example}{Example}
\newtheorem{case}{Case}
\newtheorem{subcase}{Subcase}
\newenvironment{pf}[1][]{ \vskip 1mm
 \noindent
 \ifthenelse{\equal{#1}{}}  {{\slshape Proof. }}  {{\slshape #1.} } }{\qed\bigskip}
\newcounter{alphabet}
\def\be{\begin{equation}}
\def\ee{\end{equation}}
\newcommand{\bee}{\begin{enumerate}}
\newcommand{\eee}{\end{enumerate}}
\newcommand{\blem}{\begin{lem}}
\newcommand{\elem}{\end{lem}}
\newcommand{\bthm}{\begin{thm}}
\newcommand{\ethm}{\end{thm}}
\newcommand{\bcor}{\begin{cor}}
\newcommand{\ecor}{\end{cor}}
\newcommand{\beg}{\begin{example}}
\newcommand{\eeg}{\end{example}}
\newcommand{\begs}{\begin{examples}}
\newcommand{\eegs}{\end{examples}}
\newcommand{\bdefe}{\begin{defn}}
\newcommand{\edefe}{\end{defn}}
\newcommand{\bprob}{\begin{prob}}
\newcommand{\eprob}{\end{prob}}
\newcommand{\bques}{\begin{ques}}
\newcommand{\eques}{\end{ques}}
\newcommand{\bei}{\begin{itemize}}
\newcommand{\eei}{\end{itemize}}
\newcommand{\bcon}{\begin{conj}}
\newcommand{\econ}{\end{conj}}
\newcommand{\bcons}{\begin{conjs}}
\newcommand{\econs}{\end{conjs}}
\newcommand{\bprop}{\begin{propo}}
\newcommand{\eprop}{\end{propo}}
\newcommand{\br}{\begin{rem}}
\newcommand{\er}{\end{rem}}
\newcommand{\brs}{\begin{rems}}
\newcommand{\ers}{\end{rems}}
\newcommand{\bo}{\begin{obser}}
\newcommand{\eo}{\end{obser}}
\newcommand{\bos}{\begin{obsers}}
\newcommand{\eos}{\end{obsers}}
\newcommand{\bca}{\begin{case}}
\newcommand{\eca}{\end{case}}
\newcommand{\bsca}{\begin{subcase}}
\newcommand{\esca}{\end{subcase}}
\newcommand{\bpf}{\begin{pf}}
\newcommand{\epf}{\end{pf}}
\newcommand{\ba}{\begin{array}}
\newcommand{\ea}{\end{array}}
\newcommand{\beq}{\begin{eqnarray}}
\newcommand{\beqq}{\begin{eqnarray*}}
\newcommand{\eeq}{\end{eqnarray}}
\newcommand{\eeqq}{\end{eqnarray*}}
\newcounter{minutes}
\newcounter{hours}
\begin{document}
\title[]{The Bohr operator on analytic functions\\ and sections}
\author[Y. Abu Muhanna]{Yusuf Abu Muhanna}
\address{Y. Abu Muhanna, Department of Mathematics, American University of
Sharjah, UAE-26666.}
\email{ymuhanna@aus.edu}
\author[R. M. Ali]{Rosihan M. Ali}
\address{R. M. Ali, School of Mathematical Sciences, Universiti Sains
Malaysia, 11800 USM Penang, Malaysia.}
\email{rosihan@usm.my}
\author[S. K. Lee]{See Keong Lee}
\address{S. K. Lee, School of Mathematical Sciences, Universiti Sains
Malaysia, 11800 USM Penang, Malaysia.}
\email{sklee@usm.my}
\subjclass[2010]{30H50, 47B48, 30C50}
\keywords{Bohr radius, Rogosinski radius, Bohr operator, von Neumann inequality, section of analytic functions, subordination}

\begin{abstract}

For $f(z) = \sum_{n=0}^{\infty} a_n z^n$ and a fixed $z$ in the unit disk, $|z| = r,$ the Bohr operator $\mathcal{M}_r$ is given by
\[\mathcal{M}_r (f) = \sum_{n=0}^{\infty} |a_n| |z^n| = \sum_{n=0}^{\infty} |a_n| r^n.\]
This papers develops normed theoretic approaches on $\mathcal{M}_r$. Using earlier results of Bohr and Rogosinski, the following results are readily established: if $f(z)=\sum_{n=0}^{\infty} a_{n}z^{n}$ is subordinate (or quasi-subordinate) to $h(z)=\sum_{n=0}^{\infty} b_{n}z^{n}$ in the unit disk, then
\[\mathcal{M}_{r}(f) \leq \mathcal{M}_{r}(h), \quad  0 \leq r \leq 1/3,\]
that is,
\[\sum_{n=0}^{\infty} \left\vert a_{n}\right\vert |z|^{n} \leq \sum_{n=0}^{\infty} \left\vert b_{n}\right\vert |z|^{n}, \quad 0 \leq |z| \leq 1/3. \]
Further, each $k$-th section $s_k(f) = a_0 + a_1 z + \cdots + a_kz^k$ satisfies
\[\left| s_k(f)\right| \leq \mathcal{M}_r \left( s_k(h)\right), \quad 0 \leq r \leq 1/2,\]
and
\[\mathcal{M}_{r}\left( s_{k}(f) \right) \leq \mathcal{M}_{r}(s_{k}(h)), \quad 0 \leq r \leq 1/3.\]
A von Neumann-type inequality is also obtained for the class consisting of Schwarz functions in the unit disk.

\end{abstract}

\maketitle






\pagestyle{myheadings}
\markboth{Y. Abu Muhanna, R. M. Ali, S. K. Lee}{Bohr operator on analytic functions}

\section{\textbf{Introduction}}

Let $\mathcal{A}$ be the class consisting of analytic functions $f(z) = \sum_{n=0}^{\infty} a_n z^n$ in the unit disk $\mathbb{D} = \{ z \in \mathbb{C} : |z| < 1\}.$ A classical result of Bohr \cite{BOH} states the following:

\begin{theorem}[Bohr's theorem]\label{thm:Bohr}
If $f(z) = \sum_{n=0}^{\infty} a_n z^n \in \mathcal{A}$ satisfies $|f(z)| \leq 1$ for all $z \in \mathbb{D}$, then
\begin{equation}\label{Eq1:Bohr_inequality}
\sum_{n=0}^{\infty} |a_n| |z^n| \leq 1
\end{equation}
for $|z| \leq 1/3,$ and the constant $1/3$ cannot be improved.
\end{theorem}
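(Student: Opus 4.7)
The plan is to reduce to the non-trivial case and then apply the classical Schur--Carathéodory coefficient bounds for bounded analytic functions. If $|a_0|=1$, the maximum modulus principle forces $f$ to be a unimodular constant, whence \eqref{Eq1:Bohr_inequality} is trivial. So I would assume $|a_0|<1$ from here on.

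The key ingredient is the sharpened coefficient estimate
\[
|a_n| \leq 1 - |a_0|^2, \qquad n \geq 1,
\]
valid for every $f\in\mathcal{A}$ with $|f|\leq 1$ in $\mathbb{D}$. I would derive this by composing $f$ with the disk automorphism $\phi_{a_0}(w) = (w-a_0)/(1-\bar{a_0}w)$, so that $g = \phi_{a_0}\circ f$ is a Schwarz function with $g(0)=0$, and then inverting via $f = \phi_{a_0}^{-1}\circ g$ and reading off the Taylor coefficients, using $|g(z)/z|\leq 1$ on $\mathbb{D}$ from the Schwarz lemma (plus, for $n\geq 2$, an iteration of the same argument applied to $g(z)/z$).

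With this bound in hand, a straightforward geometric sum gives, for $|z|=r\in[0,1)$,
\[
\sum_{n=0}^{\infty} |a_n|\, r^n \;\leq\; |a_0| + (1-|a_0|^2)\sum_{n=1}^{\infty} r^n \;=\; |a_0| + \frac{(1-|a_0|^2)\,r}{1-r}.
\]
Writing $a=|a_0|\in[0,1)$, the requirement that this be $\leq 1$ reduces, after clearing $1-r$ and cancelling the factor $1-a>0$, to $(1+a)r \leq 1-r$, i.e.\ $r\leq 1/(2+a)$. Since $a<1$ is arbitrary, the inequality holds uniformly for $r\leq 1/3$.

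For sharpness I would test the one-parameter family $f_a(z) = (a-z)/(1-az)$ with $a\in(0,1)$: its Taylor coefficients are $a_0=a$ and $a_n = -(1-a^2)a^{n-1}$ for $n\geq 1$, so
\[
\sum_{n=0}^{\infty} |a_n|\, r^n \;=\; a + \frac{(1-a^2)r}{1-ar},
\]
and letting $a\to 1^-$ shows that for any $r>1/3$ the bound $1$ is violated. The main obstacle in the argument is the coefficient inequality $|a_n|\leq 1-|a_0|^2$; everything else is bookkeeping on a geometric series.
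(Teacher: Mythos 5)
The paper does not actually prove this theorem: it is quoted as Bohr's classical result, with a historical remark that Bohr obtained $1/6$ and that M.~Riesz, Schur and Wiener independently sharpened it to $1/3$. So there is no in-paper argument to compare against; your proposal is the standard Wiener-style proof, and most of it is correct. The reduction to $|a_0|<1$, the geometric-series summation, the elementary computation showing that $r\le 1/(2+|a_0|)$ suffices (hence $r\le 1/3$ uniformly), and the sharpness analysis with $f_a(z)=(a-z)/(1-az)$, for which $\sum|a_n|r^n>1$ exactly when $r>1/(1+2a)\to 1/3$, are all sound.

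The gap is in your derivation of the key inequality $|a_n|\le 1-|a_0|^2$ for $n\ge 2$. Composing with $\phi_{a_0}$ and inverting gives $f = a_0 + (1-|a_0|^2)\sum_{k\ge1}(-\bar a_0)^{k-1}g^k$ with $g$ a Schwarz function, so your plan requires showing that the $n$-th Taylor coefficient of $\sum_{k=1}^{n}(-\bar a_0)^{k-1}g^k$ has modulus at most $1$. For $n=1$ this is Schwarz's lemma, and for $n=2$ the triangle inequality together with $|c_2|\le 1-|c_1|^2$ happens to close the estimate; but already for $n=3$ the quantity is $c_3-2\bar a_0c_1c_2+\bar a_0^2c_1^3$, and the triangle inequality combined with the sharp coefficient bounds for Schwarz functions can exceed $1$ (with $|a_0|$ near $1$ and $|c_1|=|c_2|=1/2$ the resulting upper estimate is about $1.21$). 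An ``iteration of the same argument applied to $g(z)/z$'' does not repair this, because $g/z$ need not vanish at the origin and the cross terms do not telescope. The standard missing ingredient is F.~Wiener's averaging trick: for fixed $n$ and $\omega=e^{2\pi i/n}$, the function $\frac1n\sum_{j=0}^{n-1}f(\omega^jz)=\sum_{m\ge0}a_{mn}z^{mn}$ is again bounded by $1$ and is a power series in $w=z^n$, so applying your $n=1$ argument (the Schwarz--Pick estimate at the origin) to it yields $|a_n|\le 1-|a_0|^2$ in one stroke. With that substitution your proof is complete.
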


Bohr's initial proof had the majorant bounded by $1/6,$ which later was improved independently by M. Riesz, I. Schur and F. Wiener. The number $1/3$ is called the Bohr radius for analytic bounded functions in the unit disk $\mathbb{D}$.

It is amazing that this classical result still commands the interest of many. There is a vast literature related to the Bohr radius found in different areas of mathematics. These include the works of \cite{YM2,YM3,YM4,YM5,AliAbdulhadiNg,BDK}, and more recently, in the works of \cite{AgraMoha,AliNg,EPR,KayPon1,LiuPon, Ravi}. There are also studies on the multi-dimensional Bohr radius, see for example, \cite{Dmn, Boas, Aiz1, Aiz2, Def, Bay}, and operator-theoretic Bohr radius, for example in \cite{Dixon, PS, PS2}. A recent survey paper on Bohr's theorem can be found in \cite{YM1}.

Another notion closely related to the Bohr radius is the Rogosinski radius contained in the following result of Rogosinski \cite{Rogosinski}.

\begin{theorem}\label{thm:Rogosinskiradius}
If $f(z) = \sum_{n=0}^{\infty} a_n z^n \in \mathcal{A}$ and $|f(z)| \leq 1$ for all $z \in \mathbb{D}$, then for every $k \in\mathbb{N}_0 = \{0, 1, 2, \dots \},$ each section satisfies
\begin{equation}\label{Eq3:Rogosinski_inequality}
\left| s_{k}(f) \right| := \left| s_{k}(z;f) \right|= \left|\sum_{n=0}^{k} a_n z^n \right| \leq 1
\end{equation}
for $|z| \leq 1/2$. The constant $1/2$ cannot be improved.
\end{theorem}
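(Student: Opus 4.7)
Since $s_k(z;f)$ is a polynomial in $z$, the maximum modulus principle reduces the desired bound $|s_k(z;f)|\le 1$ on $\{|z|\le 1/2\}$ to verifying it on the circle $|z|=1/2$. My plan is to exploit a Cauchy-type integral representation of the partial sum together with the hypothesis $|f|\le 1$.

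Starting from the coefficient formula $a_n=\frac{1}{2\pi i}\oint_{|\zeta|=\rho}f(\zeta)\zeta^{-n-1}\,d\zeta$ for $\rho<1$ and summing a geometric series, one obtains
$$s_k(z;f)\;=\;\frac{1}{2\pi i}\oint_{|\zeta|=\rho}\frac{f(\zeta)\bigl[1-(z/\zeta)^{k+1}\bigr]}{\zeta-z}\,d\zeta,\qquad |z|<\rho<1.$$
Letting $\rho\to 1^-$ (justified by the boundedness of $f$) yields a representation of $s_k(z;f)$ as the integral of the boundary values of $f$ against a truncated geometric-series kernel. The main obstacle is that a naive $L^\infty$ versus $L^1$ estimate is not sharp enough: a direct calculation shows that the $L^1$ norm of this kernel on the circle $|z|=1/2$ tends to a constant strictly greater than $1$ as $k\to\infty$, so the triangle inequality using only $|f|\le 1$ cannot by itself yield the bound $\le 1$.

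To overcome this, I would employ the decomposition $2\,s_k(z;f)=f(z)+g(z)$, where $g(z):=\sum_{n=0}^k a_n z^n-\sum_{n=k+1}^\infty a_n z^n$ is $f$ with the signs of its higher-order coefficients reversed, and reduce the problem to the auxiliary claim $|g(z)|\le 1$ for $|z|\le 1/2$. This transforms Rogosinski's estimate into a coefficient-type inequality attackable via the Schur algorithm (replacing $f$ by the new Schur function $F(z)=(f(z)-a_0)/[z(1-\overline{a_0}f(z))]$) together with induction on $k$, or alternatively via sharp subordination methods in the spirit of the Bohr-type inequalities stated in the abstract. Once $|g|\le 1$ on $|z|\le 1/2$ is in hand, the triangle inequality yields $|s_k(z;f)|\le \frac{1}{2}(|f(z)|+|g(z)|)\le 1$, as required.

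For the sharpness of $1/2$, the family of M\"{o}bius transformations $f_a(z)=(z-a)/(1-az)$, $a\in(0,1)$, provides a concrete extremal family: their partial sums admit the closed form $s_k(z;f_a)=-a+(1-a^2)\,z\,[1-(az)^k]/(1-az)$, and a direct calculation shows that for any $r>1/2$ one can choose $a$ sufficiently close to $1$ and $k$ sufficiently large so that $|s_k(z;f_a)|$ exceeds $1$ at some point with $|z|=r$, proving that the Rogosinski radius cannot be enlarged past $1/2$.
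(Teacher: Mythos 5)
First, note that the paper offers no proof of this statement: it is quoted as a classical theorem with a citation to Rogosinski's 1923 paper, so your attempt can only be judged on its own terms. The preliminary reductions (maximum modulus, the Cauchy-type representation of $s_k$, and the observation that a naive $L^\infty$ versus $L^1$ estimate of the truncated kernel cannot close the argument) are fine, but the central step is both unproved and false. You reduce the theorem to the auxiliary claim that
\[
g(z)\;=\;2s_k(z;f)-f(z)\;=\;\sum_{n=0}^{k}a_nz^n-\sum_{n=k+1}^{\infty}a_nz^n
\]
satisfies $|g(z)|\le 1$ for $|z|\le 1/2$. No argument is given for this claim beyond an appeal to ``the Schur algorithm together with induction on $k$'' or ``sharp subordination methods''; since the claim visibly implies the theorem, this is where the entire difficulty lives, and it cannot be waved at. Worse, the claim is false: take $f(z)=(z-a)/(1-az)$ with $a=3/5$ and $k=0$, so that $g=2a_0-f=-2a-f$. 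Then $f(1/2)=-1/7$ and $g(1/2)=-\tfrac{6}{5}+\tfrac{1}{7}=-\tfrac{37}{35}$, whence $|g(1/2)|=37/35>1$ although $|f|\le 1$ throughout $\mathbb{D}$. So the intended estimate $|s_k|\le\tfrac12\left(|f|+|g|\right)\le 1$ cannot be run, and the route collapses; any correct proof must exploit cancellation rather than bound $f$ and the sign-flipped function separately.

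A secondary problem concerns sharpness. Your closed form $s_k(z;f_a)=-a+(1-a^2)z\bigl(1-(az)^k\bigr)/(1-az)$ is correct, but the prescription ``$a$ close to $1$ and $k$ large'' does not produce modulus exceeding $1$: as $a\to1$ with $k$ fixed one has $s_k(z;f_a)\to-1$, while as $k\to\infty$ with $a$ fixed one has $s_k(z;f_a)\to f_a(z)$, whose modulus is less than $1$ on $|z|=r<1$. The family does work, but with different tuning: for $r>1/2$ take $k=1$, $a=1/(2r)$ and $z=-r$, which gives $|s_1(-r;f_a)|=a+(1-a^2)r=r+\tfrac{1}{4r}>1$ because $(2r-1)^2>0$. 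As it stands, then, neither half of your argument is complete.
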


The number $1/2$ in the preceding theorem is known as the Rogosinski radius. In  \cite{Aizenberg, AES}, Aizenberg \textit{et al.} extended the Rogosinski phenomenon in several complex variables. Aizenberg \cite{Aizenberg} also studied the Rogosinski radius on Hardy spaces and Reinhardt domains. For more recent advances, see for example \cite{AKP, KayPon2, LiuShangXu}.

In the following section, the Bohr operator $\mathcal{M}_r$ is introduced on Banach algebras. This normed theoretic approach is expeditiously used to establish existing and new results. Of particular interest are the recent works of Bhowmik and Das \cite[Corollary 3.2]{BhowDas} on subordination, and also of  Alkhaleefah \emph{et al.} \cite[Theorem 2.1]{AKP} on quasi-subordination.

Recall that for two analytic functions $f$ and $h$ in $\mathbb{D},$ $f$ is \emph{subordinate} to $h$, written $f \prec h$, if there is a Schwarz function $\varphi$ such that $f(z) = h(\varphi (z))$ for $z \in \mathbb{D}$ (see \cite{Duren}). Here a \emph{Schwarz function} is an analytic function $\varphi$ satisfying $\varphi(0) = 0$ and $|\varphi(z)| \leq 1$ in $\mathbb{D}$. If there also exists an analytic function $\psi$ with $|\psi(z)| \leq 1$ in $\mathbb{D}$ and $f(z)=\psi (z)h(\varphi(z))$, then $f$ is said to be \emph{quasi-subordinate} to $h$ \cite{Duren, Pommerenke}.

We shall establish the following results in this paper: if $f(z)=\sum_{n=0}^{\infty} a_{n}z^{n}$ is subordinate (or quasi-subordinate) to $h(z)=\sum_{n=0}^{\infty} b_{n}z^{n}$ in $\mathbb{D}$, then
\[\sum_{n=0}^{\infty} \left\vert a_{n}\right\vert |z|^{n} \leq \sum_{n=0}^{\infty} \left\vert b_{n}\right\vert |z|^{n}, \quad |z| \leq 1/3. \]
Further, each $k$-th section $s_k(f) = a_0 + a_1 z + \cdots + a_kz^k$ satisfies
\[\left\vert\sum_{n=0}^{k} a_{n} z^{n}\right\vert \leq \sum_{n=0}^{k} \left\vert b_{n}\right\vert |z|^{n}, \quad |z| \leq 1/2, \]
and
\[\sum_{n=0}^{k} \left\vert a_{n}\right\vert |z|^{n} \leq \sum_{n=0}^{k} \left\vert b_{n}\right\vert |z|^{n}, \quad |z| \leq 1/3. \]
A von Neumann-type inequality is also obtained for the class consisting of Schwarz functions in $\mathbb{D}$.

\section{\textbf{Bohr operator on Banach algebras}}

For a fixed $z$ in $\mathbb{D}$, let
\[\mathcal{F}_{z} = \left\{ f(z) = \sum_{n=0}^{\infty} a_n z^n: f \in \mathcal{A} \right\}.\]
The \emph{Bohr operator }$\mathcal{M}_r$ on $\mathcal{F}_{z}$, $|z|=r,$ is given by
\[\mathcal{M}_r (f) = \sum_{n=0}^{\infty} |a_n| |z^n| = \sum_{n=0}^{\infty} |a_n| r^n.\]
The following result is readily established.

\begin{theorem}\label{Prop:Operator}
For each fixed $z$ in $\mathbb{D}$, $|z|=r,$ the Bohr operator $\mathcal{M}_r$ satisfies
\begin{enumerate}
\item[(i)] $\mathcal{M}_r(f) \geq 0,$ and $\mathcal{M}_r(f) = 0$ if and only if $f \equiv 0,$

\item[(ii)] $\mathcal{M}_r(f+g) \leq \mathcal{M}_r(f) + \mathcal{M}_r(g),$

\item[(iii)] $\mathcal{M}_r(\alpha f) = |\alpha| \mathcal{M}_r(f), \quad \alpha \in\mathbb{C},$

\item[(iv)] $\mathcal{M}_r(f\cdot g)\leq \mathcal{M}_r(f)\cdot \mathcal{M}_r(g),$
\item[(v)] $\mathcal{M}_r(1) = 1.$
\end{enumerate}
\end{theorem}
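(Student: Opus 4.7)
The plan is to verify each of the five properties directly from the definition $\mathcal{M}_r(f) = \sum_{n=0}^{\infty} |a_n| r^n$. A preliminary observation I would record first: since $f \in \mathcal{A}$ is analytic on $\mathbb{D}$, Cauchy's estimate yields $|a_n| \leq M_\rho/\rho^n$ for each $\rho \in (r,1)$, so $\mathcal{M}_r(f) < \infty$ for every $r \in [0,1)$. This absolute convergence is what will legitimize the rearrangements and interchanges of summation needed below.

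Properties (i), (ii), (iii), and (v) reduce to term-by-term elementary facts. For (i), each summand $|a_n| r^n$ is non-negative, and for $r > 0$ the sum vanishes if and only if every coefficient $a_n$ is zero, i.e.\ $f \equiv 0$. Subadditivity (ii) follows by summing $|a_n + b_n| \leq |a_n| + |b_n|$ weighted by $r^n$; absolute homogeneity (iii) follows by pulling $|\alpha|$ out of each term; and (v) is immediate, since the constant function $1 \in \mathcal{A}$ has Taylor coefficients $(1,0,0,\ldots)$.

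The only step with combinatorial content is the submultiplicativity (iv). I would write $fg$ as a Cauchy product: if $c_n = \sum_{k=0}^{n} a_k b_{n-k}$ denotes the $n$-th coefficient of $fg$, then the triangle inequality gives $|c_n| \leq \sum_{k=0}^{n} |a_k|\,|b_{n-k}|$, and summing $r^n |c_n|$ over $n$ recognizes the upper bound as the Cauchy product of the two absolutely convergent series $\sum |a_n| r^n$ and $\sum |b_n| r^n$, which factors as $\mathcal{M}_r(f)\,\mathcal{M}_r(g)$. There is no substantial obstacle in the proof; the theorem essentially records that $\mathcal{M}_r$ is a submultiplicative seminorm (in fact a norm, for $r > 0$, after identifying functions with their coefficient sequences), and its value lies chiefly in the clean notation it provides for the subordination, Rogosinski, and von Neumann-type inequalities announced in the abstract.
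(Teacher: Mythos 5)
Your verification is correct and is exactly the elementary argument the paper intends: the authors give no proof at all (the theorem is introduced with ``The following result is readily established''), and your term-by-term checks for (i)--(iii), (v) together with the Cauchy-product bound for (iv) are the standard way to fill that in. Your parenthetical observation that (i) is only a genuine norm for $r>0$ (at $r=0$ one has $\mathcal{M}_0(f)=|a_0|$, which vanishes for any $f$ with $a_0=0$) is a fair, if minor, sharpening that the paper glosses over.
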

Thus the space $\mathcal{F}_{z}$ with norm $\mathcal{M}_r$ constitute a Banach algebra.


Paulsen and Singh \cite{PS} have extended Bohr's theorem to Banach algebras. It is known \cite{BDK}, however, that not all Banach spaces satisfy a Bohr phenomenon. For a fixed $z \in \mathbb{D},$ let $\mathcal{B}_{z} \subset \mathcal{F}_{z}$ be the set
\[\mathcal{B}_{z} = \{\phi\in \mathcal{F}_{z} : |\phi (z)|<1 \}.\]
Then $\mathcal{B}_z$ is also a Banach algebra which by Bohr's theorem, is uniformly bounded whenever $|z| \leq 1/3.$

A Banach algebra $B$ satisfies the von Neumann inequality if for all $f \in B$ with $\| f \| \leq 1$ and any polynomial $p$, then
\begin{equation}\label{Eq2:vonNeumann}
\| p(f(z)) \| \leq \| p \|_{\infty}.
\end{equation}
In \cite{Neumann} (see also \cite{Dixon}), von Neumann showed that the above inequality holds true for the algebra $L(H)$ of bounded operators on a Hilbert space $H$. It was also conjectured that
\begin{enumerate}
\item[(a)] every Banach algebra is isomorphic to $L(H)$,

\item[(b)] every Banach algebra satisfies the von Neumann inequality \eqref{Eq2:vonNeumann}.
\end{enumerate}

By considering the Banach algebra $l_{\beta }^{1}$ of sequences given by
\begin{equation*}
l_{\beta }^{1} = \left\{ x = (x_{1}, x_{2}, x_{3},\ldots) : \frac{1}{\beta} \sum_{j=1}^{\infty} |x_{j}| < \infty \right\},
\end{equation*}
Dixon in \cite{Dixon} disproved the conjecture (a) for any $0 < \beta \leq 1.$ By using Bohr's theorem, Dixon also showed that $l_{\beta }^{1}$ satisfies the non-unital von Neumann inequality for $0 < \beta \leq 1/3,$ but fails whenever $\beta >1/3$.

\section{\textbf{The Bohr operator on classes of subordination}}

In this section, we present our main results. Here is a fundamental result for $\mathcal{M}_r$.

\begin{lemma}\label{lem:BohrSchwarz}
If $\varphi$ is a Schwarz function, then $\mathcal{M}_{r}(\varphi(z)) \leq |z|$, for $|z|\leq 1/3$.
\end{lemma}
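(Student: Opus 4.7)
The plan is to reduce the statement to a direct application of Bohr's theorem (Theorem \ref{thm:Bohr}) after peeling off one factor of $z$ from the Schwarz function.

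First I would expand $\varphi(z) = \sum_{n=1}^{\infty} c_n z^n$, using that $\varphi(0)=0$. The quantity to control is
\[
\mathcal{M}_r(\varphi(z)) = \sum_{n=1}^{\infty} |c_n|\, r^n = r \sum_{n=0}^{\infty} |c_{n+1}|\, r^n.
\]
So the target inequality $\mathcal{M}_r(\varphi(z)) \leq r$ is equivalent to $\sum_{n=0}^{\infty} |c_{n+1}|\, r^n \leq 1$ on $|z|\leq 1/3$.

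Next I would invoke Schwarz's lemma: the function $\psi(z) := \varphi(z)/z = \sum_{n=0}^{\infty} c_{n+1} z^n$ extends analytically to $\mathbb{D}$ and satisfies $|\psi(z)| \leq 1$ there. Thus $\psi \in \mathcal{A}$ is an admissible function for Bohr's theorem, and applying Theorem \ref{thm:Bohr} to $\psi$ gives
\[
\sum_{n=0}^{\infty} |c_{n+1}|\, r^n \leq 1, \qquad 0 \leq r \leq 1/3.
\]
Multiplying both sides by $r$ yields $\mathcal{M}_r(\varphi(z)) \leq r = |z|$ for $|z| \leq 1/3$, which is the claim.

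There is essentially no obstacle here; the only subtlety is recognizing that the vanishing $\varphi(0)=0$ is exactly what lets us divide out a $z$ and pass from the "$\leq 1$" bound of Bohr's theorem to the sharper "$\leq |z|$" bound. The constant $1/3$ is inherited directly from Theorem \ref{thm:Bohr} applied to $\psi=\varphi/z$, so no optimization or separate sharpness argument is required for this lemma.
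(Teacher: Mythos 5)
Your proof is correct and follows essentially the same route as the paper: both divide out the factor $z$ (justified by $\varphi(0)=0$ and the Schwarz lemma), apply Bohr's theorem to the bounded quotient $\varphi(z)/z$, and multiply back by $r$. Your version merely makes the coefficient bookkeeping explicit where the paper leaves it implicit.
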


\begin{proof}
 Since $\left\vert \varphi (z)/z \right\vert \leq 1$ in $\mathbb{D}$, Bohr's theorem readily shows $\mathcal{M}_{r}(\varphi (z)/z) \leq 1$ for $0 \leq r\leq 1/3$.
\end{proof}

The following result on subordination was first proved by Bhowmik and Das in \cite[Corollary 3.2]{BhowDas}. We give an alternate proof using the Bohr operator $\mathcal{M}_r$.

\begin{theorem}\label{thm:sub}
If $f \prec h$ in $\mathbb{D}$, then
\[\mathcal{M}_{r}(f) \leq \mathcal{M}_{r}(h), \quad  0 \leq r \leq 1/3.\]
\end{theorem}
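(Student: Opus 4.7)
The plan is to use the definition of subordination to write $f$ as a composition $h \circ \varphi$ and then exploit the algebraic properties of $\mathcal{M}_r$ collected in Theorem \ref{Prop:Operator} to reduce the claim to a statement about the Schwarz function $\varphi$, which is exactly what Lemma \ref{lem:BohrSchwarz} supplies. Concretely, since $f \prec h$, there is a Schwarz function $\varphi$ with $f(z) = h(\varphi(z))$; expanding $h(w) = \sum_{n=0}^{\infty} b_n w^n$, one gets the formal representation $f(z) = \sum_{n=0}^{\infty} b_n \varphi(z)^n$.

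Next, I would apply $\mathcal{M}_r$ to the partial-sum polynomials $f_k(z) := \sum_{n=0}^{k} b_n \varphi(z)^n$, where everything is truly finite and the algebraic properties of Theorem \ref{Prop:Operator} apply without any convergence issue. By subadditivity (ii), homogeneity (iii) and submultiplicativity (iv),
\[
\mathcal{M}_r(f_k) \;\leq\; \sum_{n=0}^{k} |b_n|\, \mathcal{M}_r(\varphi)^n.
\]
Then Lemma \ref{lem:BohrSchwarz} bounds $\mathcal{M}_r(\varphi) \leq r$ for $r \leq 1/3$, which yields
\[
\mathcal{M}_r(f_k) \;\leq\; \sum_{n=0}^{k} |b_n|\, r^n \;\leq\; \mathcal{M}_r(h).
\]

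The remaining step, and arguably the main subtlety, is the passage from the polynomial approximants $f_k$ to $f$ itself. Since $\varphi(\mathbb{D}) \subset \mathbb{D}$, the series $\sum b_n \varphi(z)^n$ converges locally uniformly on $\mathbb{D}$, so $f_k \to f$ uniformly on compact subsets and hence each Taylor coefficient of $f_k$ at the origin converges to the corresponding Taylor coefficient of $f$. Writing $f_k(z) = \sum_n a_n^{(k)} z^n$ and $f(z) = \sum_n a_n z^n$, one has $a_n^{(k)} \to a_n$ for every $n$, so by Fatou's lemma applied to the counting measure on $\mathbb{N}_0$,
\[
\mathcal{M}_r(f) \;=\; \sum_{n=0}^{\infty} |a_n|\, r^n \;\leq\; \liminf_{k \to \infty} \sum_{n=0}^{\infty} |a_n^{(k)}|\, r^n \;=\; \liminf_{k \to \infty} \mathcal{M}_r(f_k).
\]
Combining this with the previous inequality produces $\mathcal{M}_r(f) \leq \mathcal{M}_r(h)$ for $0 \leq r \leq 1/3$, as required. (If $\mathcal{M}_r(h) = \infty$ the inequality is vacuous, so one may assume finiteness throughout.) The only genuine obstacle is the bookkeeping to pass to the limit $k \to \infty$; the heart of the argument is the one-line application of Lemma \ref{lem:BohrSchwarz} inside the submultiplicative bound.
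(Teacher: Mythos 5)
Your proof is correct and follows essentially the same route as the paper: write $f = h\circ\varphi$, use the subadditivity and submultiplicativity of $\mathcal{M}_r$ from Theorem \ref{Prop:Operator}, and invoke Lemma \ref{lem:BohrSchwarz} to bound $\mathcal{M}_r(\varphi)\leq r$. The only difference is that you justify the interchange of $\mathcal{M}_r$ with the infinite sum via partial sums and Fatou's lemma, a step the paper's proof applies directly without comment; your version is the more careful one.
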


\begin{proof}
Let $h(z)=\sum_{n=0}^{\infty} b_{n}z^{n}$. For some Schwarz function $\varphi$, Theorem \ref{Prop:Operator} and the preceding lemma give
\begin{align*}
\mathcal{M}_{r}(f) &= \mathcal{M}_{r}( h(\varphi) ) = \mathcal{M}_{r} \left( \sum_{n=0}^{\infty} b_{n} \big(\varphi(z)\big)^{n}\right)\\
&\leq \sum_{n= 0}^{\infty} |b_{n}| \left(\mathcal{M}_{r}\big(\varphi (z)\big)\right)^{n} \leq \sum_{n=0}^{\infty} |b_{n}||z|^{n}
\end{align*}
for $0 \leq r \leq 1/3.$ Thus
\[ \mathcal{M}_{r}(f) \leq \mathcal{M}_{r}(h). \qedhere\]

\end{proof}

Theorem \ref{thm:sub} and Theorem \ref{Prop:Operator} give a more general result.

\begin{theorem}\label{thm:Gensub}
Suppose the analytic functions $f, g$ and $h$ satisfy $f(z) = g(z) h(\varphi(z))$, $z\in\mathbb{D}$, for some Schwarz function $\varphi$. Further suppose that $|g(z)| \leq b$ for $|z| < \rho \leq 1$. Then
\begin{equation*}
\mathcal{M}_{r}(f) \leq b \mathcal{M}_{r}(h),
\end{equation*}
for $0 \leq r \leq \rho/3.$
\end{theorem}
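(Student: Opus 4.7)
The plan is to combine three ingredients already available: the submultiplicativity of $\mathcal{M}_r$ from Theorem \ref{Prop:Operator}(iv), the subordination estimate of Theorem \ref{thm:sub}, and a rescaling trick that turns the hypothesis $|g(z)|\le b$ on $|z|<\rho$ into a Bohr-type bound on $\mathcal{M}_r(g)$.

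First I would write $f = g \cdot (h\circ\varphi)$ and apply Theorem \ref{Prop:Operator}(iv) to obtain
\[
\mathcal{M}_r(f) \;\le\; \mathcal{M}_r(g)\,\mathcal{M}_r(h\circ\varphi).
\]
Since $h\circ\varphi \prec h$, Theorem \ref{thm:sub} gives $\mathcal{M}_r(h\circ\varphi) \le \mathcal{M}_r(h)$ for $0\le r\le 1/3$, and this range is automatically available because $\rho\le 1$ forces $\rho/3\le 1/3$.

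The remaining and genuinely non-routine step is to show $\mathcal{M}_r(g)\le b$ on the smaller range $0\le r\le \rho/3$. For this I would rescale: the function $G(\zeta) := g(\rho\zeta)/b$ is analytic on $\mathbb{D}$ and satisfies $|G(\zeta)|\le 1$ there, so Bohr's theorem (Theorem \ref{thm:Bohr}) applied to $G$ yields
\[
\sum_{n=0}^{\infty} \frac{|g_n|\rho^n}{b}\,|\zeta|^n \;\le\; 1, \qquad |\zeta|\le 1/3.
\]
Setting $|\zeta|=r/\rho$, i.e.\ taking $r\le \rho/3$, this rearranges to $\mathcal{M}_r(g)=\sum_{n=0}^\infty |g_n| r^n \le b$.

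Stringing the three estimates together delivers $\mathcal{M}_r(f)\le b\,\mathcal{M}_r(h)$ on $0\le r\le \rho/3$. The main obstacle, as indicated, is the rescaling step: without the change of variable $\zeta = z/\rho$, one cannot directly feed the hypothesis on $|g|$ into Bohr's theorem, since $g$ is only assumed bounded on a sub-disk of $\mathbb{D}$. The reduction in radius from $1/3$ to $\rho/3$ is precisely the cost of this rescaling, and matches the conclusion of the theorem.
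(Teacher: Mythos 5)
Your proposal is correct and follows essentially the same route as the paper: submultiplicativity of $\mathcal{M}_r$, Bohr's theorem applied to $g/b$ on the disk of radius $\rho$ to get $\mathcal{M}_r(g)\le b$ for $r\le\rho/3$, and Theorem \ref{thm:sub} to bound $\mathcal{M}_r(h\circ\varphi)$ by $\mathcal{M}_r(h)$. The only difference is that you spell out the rescaling $\zeta=z/\rho$ explicitly, which the paper leaves implicit.
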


\begin{proof}
It follows from Theorem \ref{Prop:Operator} that
\[\mathcal{M}_r(f) \leq \mathcal{M}_r(g) \, \mathcal{M}_r\left(h\big(\varphi\big) \right).\]
Since $g$ is uniformly bounded by $b$ on $\mathbb{D}_{\rho} := \{z \in\mathbb{C} : |z| < \rho\}$, it follows that $|g(z)/b| \leq 1$ for all $z\in\mathbb{D}_{\rho}$. So for $0 < r \leq \rho/3$,  Theorem \ref{Eq1:Bohr_inequality} shows that $\mathcal{M}_r(g) \leq b$. On the other hand, Theorem \ref{thm:sub} yields $\mathcal{M}_{r}(h(\varphi)) \leq \mathcal{M}_{r}(h)$ for $0 < r \leq 1/3$. Hence
\[
\mathcal{M}_r(f) \leq b \mathcal{M}_r(h).
\]
for $0 < r \leq \rho/3.$
\end{proof}

\begin{rem}
Obviously, Theorem \ref{thm:Gensub} reduces to Theorem \ref{thm:sub} in the event $g$ is a constant one function.
\end{rem}

Theorem \ref{thm:Gensub} also readily implies the following quasi-subordination result, first established by Alkhaleefah \emph{et al.} in \cite[Theorem 2.1]{AKP}.

\begin{corollary}\label{thm:Quasisub}
 Suppose $f(z) = g(z)h(\varphi(z))$ in $\mathbb{D}$, where $g$ and $h$ are analytic, $|g(z)| \leq 1$ in $\mathbb{D}$, and $\varphi$ is a Schwarz function. Then
\[\mathcal{M}_{r}(f) \leq \mathcal{M}_{r}(h)\]
for $0 \leq r \leq 1/3$.
\end{corollary}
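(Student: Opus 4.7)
The plan is simply to recognize that Corollary \ref{thm:Quasisub} is the specialization of Theorem \ref{thm:Gensub} obtained by taking the bound on $g$ to be $b=1$ on the full disk, i.e.\ $\rho=1$. Concretely, the hypothesis $|g(z)| \le 1$ on $\mathbb{D}$ means that $g$ satisfies the bound $|g(z)| \le b$ on $\mathbb{D}_{\rho}$ with the admissible choice $b=\rho=1$, so Theorem \ref{thm:Gensub} applies verbatim to the factorization $f(z) = g(z)\,h(\varphi(z))$.

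First I would state this parameter identification explicitly: with $b=1$ and $\rho=1$, the conclusion of Theorem \ref{thm:Gensub} reads
\[
\mathcal{M}_r(f) \;\le\; b\,\mathcal{M}_r(h) \;=\; \mathcal{M}_r(h) \qquad \text{for } 0 \le r \le \rho/3 = 1/3,
\]
which is precisely the desired inequality. The only routine verification is that the Schwarz-function hypothesis on $\varphi$ and the analyticity of $g,h$ match the hypotheses of Theorem \ref{thm:Gensub}, which they do by direct inspection.

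There is essentially no obstacle here; the work has already been done in the proof of Theorem \ref{thm:Gensub}, where the submultiplicativity of $\mathcal{M}_r$ from Theorem \ref{Prop:Operator}(iv), Bohr's theorem applied to $g$, and the subordination bound $\mathcal{M}_r(h(\varphi)) \le \mathcal{M}_r(h)$ from Theorem \ref{thm:sub} were combined. In fact, one could alternatively give a self-contained one-line derivation by invoking these three ingredients directly:
\[
\mathcal{M}_r(f) \;\le\; \mathcal{M}_r(g)\,\mathcal{M}_r\bigl(h(\varphi)\bigr) \;\le\; 1 \cdot \mathcal{M}_r(h),
\]
valid on $0 \le r \le 1/3$, but presenting it as a corollary of Theorem \ref{thm:Gensub} is cleaner and emphasizes the unifying role of the Bohr operator framework.
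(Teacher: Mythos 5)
Your proposal is correct and follows exactly the paper's intended route: the paper states this corollary as an immediate consequence of Theorem \ref{thm:Gensub} with $b=\rho=1$, which is precisely your parameter identification. Nothing further is needed.
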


The following theorem displays a von Neumann-type inequality \eqref{Eq2:vonNeumann} for Schwarz functions.

\begin{theorem}\label{thm:genVN}
 Suppose $h$ is analytic in $\mathbb{D}$ and continuous in $\overline{\mathbb{D}}$. If $0 \leq r \leq 1/3$, then
 $$ \mathcal{M}_{r}\big(h(\varphi)\big)  \leq  \| h \|_{\infty}$$
for every Schwarz function  $\varphi$.
\end{theorem}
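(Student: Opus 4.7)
The plan is to deduce this theorem by concatenating two bounds: first, compare $\mathcal{M}_r\bigl(h(\varphi)\bigr)$ to $\mathcal{M}_r(h)$ via the subordination theorem already established, and second, compare $\mathcal{M}_r(h)$ to $\|h\|_\infty$ via Bohr's theorem applied to a normalized version of $h$.

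More concretely, since $\overline{\mathbb{D}}$ is compact and $h$ is continuous on it, $\|h\|_\infty = \sup_{z\in\overline{\mathbb{D}}} |h(z)|$ is finite. If $\|h\|_\infty = 0$ the inequality is trivial, so we may assume $\|h\|_\infty > 0$ and put $H := h/\|h\|_\infty$, which is analytic in $\mathbb{D}$ with $|H(z)| \leq 1$ there. Bohr's theorem (Theorem \ref{thm:Bohr}) applied to $H$ gives $\mathcal{M}_r(H) \leq 1$ for $0 \leq r \leq 1/3$, and the homogeneity property (iii) in Theorem \ref{Prop:Operator} yields
\[
\mathcal{M}_r(h) \;=\; \|h\|_\infty\, \mathcal{M}_r(H) \;\leq\; \|h\|_\infty, \qquad 0 \leq r \leq 1/3.
\]

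Next, because $\varphi$ is a Schwarz function, the composition $h(\varphi)$ is subordinate to $h$ in $\mathbb{D}$. Theorem \ref{thm:sub} then delivers
\[
\mathcal{M}_r\bigl(h(\varphi)\bigr) \;\leq\; \mathcal{M}_r(h), \qquad 0 \leq r \leq 1/3.
\]
Chaining the two inequalities gives $\mathcal{M}_r\bigl(h(\varphi)\bigr) \leq \|h\|_\infty$ on the full range $0 \leq r \leq 1/3$, which is the claim.

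There is really no substantial obstacle here: the hypothesis of continuity on $\overline{\mathbb{D}}$ is precisely what makes $\|h\|_\infty$ the right normalization, and both ingredients (Theorems \ref{thm:Bohr} and \ref{thm:sub}) are already in hand. The only thing worth double-checking is that the radius $1/3$ is tight in the sense that both invocations are valid on the same interval, which they are; no additional shrinking is needed.
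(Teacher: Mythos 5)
Your proof is correct and follows essentially the same route as the paper: the paper bounds $\mathcal{M}_r\bigl(h(\varphi)\bigr) \leq \sum_{n}|b_n|\,r^n \leq \|h\|_\infty$ by combining Lemma \ref{lem:BohrSchwarz} with submultiplicativity (which is exactly the content of Theorem \ref{thm:sub} that you invoke directly) and then Bohr's theorem for the last step. Your version merely makes explicit the normalization $h/\|h\|_\infty$ that the paper leaves implicit in its final inequality.
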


\begin{proof}
Let $h(z)=\sum_{n=0}^{\infty} b_{n}z^{n}$. Then Lemma \ref{lem:BohrSchwarz} yields
\[\mathcal{M}_{r}\big(h(\varphi)\big) \leq \sum_{n=0}^{\infty} \left\vert b_{n} \right\vert \left\vert \mathcal{M}_{r}\big(\varphi (z)\big)^{n}\right\vert \leq \sum_{n=0}^{\infty} \left\vert b_{n}\right\vert r^{n} \leq \| h \|_{\infty}\]
for $0 \leq r \leq 1/3$.
\end{proof}

Turning to sections, here is a key result on the $k$-th section $s_k(f) = a_0 + a_1 z + \cdots + a_kz^k.$

\begin{lemma}\label{lem:RogosinskiSchwarz}
Let $\varphi$ be a Schwarz function and $k \in \mathbb{N}_0$. Then
\[\sup_{|z| \leq r} \left| s_k\big(\varphi^j \big) \right| \leq |z|^j, \quad 0 \leq r \leq 1/2.\]
Further,
\[\mathcal{M}_r\left( s_k\big(\varphi^j \big) \right) \leq |z|^j, \quad 0 \leq r \leq 1/3.\]
\end{lemma}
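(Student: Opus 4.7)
The plan is to reduce both statements to direct applications of the Schwarz lemma followed by, respectively, Rogosinski's theorem (Theorem \ref{thm:Rogosinskiradius}) and Bohr's theorem (Theorem \ref{thm:Bohr}). The key observation is that for a Schwarz function $\varphi$, the quotient $\varphi(z)/z$ extends analytically to $\mathbb{D}$ with modulus at most $1$, so
\[
g(z) := \left( \frac{\varphi(z)}{z} \right)^{j}
\]
is analytic in $\mathbb{D}$ with $|g(z)| \leq 1$, and $\varphi(z)^{j} = z^{j} g(z)$.

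Writing $g(z) = \sum_{n=0}^{\infty} c_{n} z^{n}$, the series for $\varphi^{j}$ starts at degree $j$, namely $\varphi(z)^{j} = \sum_{n=0}^{\infty} c_{n} z^{n+j}$. Hence the $k$-th section vanishes when $k < j$ (in which case both claims are trivial), while for $k \geq j$ one has the clean factorisation
\[
s_{k}\!\left( \varphi^{j} \right) = z^{j}\, s_{k-j}(g).
\]
This is the step that carries all the content; once it is in place, each inequality follows from an off-the-shelf result applied to $g$.

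For the first estimate, since $|g(z)| \leq 1$ in $\mathbb{D}$, Rogosinski's theorem (Theorem \ref{thm:Rogosinskiradius}) gives $|s_{k-j}(g)| \leq 1$ whenever $|z| \leq 1/2$, and multiplying by $|z|^{j}$ yields $|s_{k}(\varphi^{j})| \leq |z|^{j}$ on the disk of radius $1/2$. For the second estimate, the factorisation together with the definition of $\mathcal{M}_{r}$ gives
\[
\mathcal{M}_{r}\!\left( s_{k}(\varphi^{j}) \right) = r^{j} \sum_{n=0}^{k-j} |c_{n}|\, r^{n} \leq r^{j} \sum_{n=0}^{\infty} |c_{n}|\, r^{n} = r^{j}\, \mathcal{M}_{r}(g),
\]
and Bohr's theorem (Theorem \ref{thm:Bohr}) applied to $g$ bounds $\mathcal{M}_{r}(g)$ by $1$ on $0 \leq r \leq 1/3$.

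There is no real obstacle here; the only subtlety is the degree-shift in the section, which is why the section radius $1/2$ (rather than any new constant) appears in the first inequality and the Bohr radius $1/3$ appears in the second. This lemma is designed to be the sectional analogue of Lemma \ref{lem:BohrSchwarz}, and the proof should be essentially as short as that of Lemma \ref{lem:BohrSchwarz}.
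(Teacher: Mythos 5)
Your proof is correct and follows essentially the same route as the paper: factor out $z^{j}$ via the bounded function $g=(\varphi/z)^{j}$ and then invoke Rogosinski's theorem for the sectional bound and Bohr's theorem for the $\mathcal{M}_r$ bound. If anything, your explicit identity $s_{k}(\varphi^{j})=z^{j}s_{k-j}(g)$ handles the degree shift more carefully than the paper, which writes $s_{k}\bigl((\varphi/z)^{j}\bigr)$ without tracking the change of index.
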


\begin{proof}
Since $\left| \left(\varphi(z)/z\right)^j\right| = \left| \varphi(z)/z\right|^j  \leq 1$, Theorem \ref{thm:Rogosinskiradius} shows that
\[\sup_{|z| \leq r} \left| s_k \left(\frac{\varphi}{z}\right)^j   \right| \leq 1\]
for $0 \leq r\leq 1/2$. Further, Theorem \ref{Eq1:Bohr_inequality} gives
\[\mathcal{M}_r\left( \left( \frac{\varphi }{z}\right) ^{j}\right) \leq1 \]
for $0 \leq r \leq 1/3$. Hence
\begin{equation*}
\mathcal{M}_{r}\left( s_{k}\left( \frac{\varphi }{z}\right) ^{j}\right)  \leq \mathcal{M}_r\left( \left( \frac{\varphi }{z}\right) ^{j}\right) \leq 1,
\end{equation*}
which implies
\[\mathcal{M}_{r}\left( s_{k}\left(\varphi ^{j}\right) \right) \leq |z|^{j}. \qedhere \]
\end{proof}

\begin{theorem}\label{thm:RogosinskiSub}
 Suppose $f(z) = \sum_{n=0}^{\infty} a_n z^n \prec h(z) = \sum_{n=0}^{\infty} b_n z^n$ in $\mathbb{D}$. Then for each $k$-th section,
\[\left| s_k(f)\right| \leq \mathcal{M}_r \left( s_k(h)\right), \quad 0 \leq r \leq 1/2,\]
that is,
\[\left| a_0 + a_1 z + \cdots + a_kz^k \right| \leq \sum_{n=0}^{k} |b_n| |z|^n, \quad 0 \leq r \leq 1/2.\]
\end{theorem}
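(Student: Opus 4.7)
The plan is to exploit the fact that in a subordination $f(z) = h(\varphi(z))$ the condition $\varphi(0) = 0$ forces the $k$-th section of $f$ to depend only on $b_0, b_1, \ldots, b_k$, after which Lemma \ref{lem:RogosinskiSchwarz} delivers the pointwise bound on the sections of the powers of $\varphi$ that does the rest.

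First I would write $f(z) = h(\varphi(z)) = \sum_{n=0}^{\infty} b_n \varphi(z)^n$, a series that converges locally uniformly on $\mathbb{D}$. Since $\varphi(0) = 0$, each power $\varphi(z)^n$ has Taylor expansion starting at degree $\geq n$, and hence the section operator $s_k$ annihilates $\varphi^n$ whenever $n > k$. Passing $s_k$ through the series (which is legitimate because local uniform convergence lets one read off Taylor coefficients term by term) leaves only a finite sum,
\[s_k(f) = \sum_{n=0}^{\infty} b_n \, s_k(\varphi^n) = \sum_{n=0}^{k} b_n \, s_k(\varphi^n).\]

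The remainder is routine: applying the triangle inequality followed by Lemma \ref{lem:RogosinskiSchwarz}, which for $|z| \leq 1/2$ gives $|s_k(\varphi^n)| \leq |z|^n$, yields
\[|s_k(f)| \leq \sum_{n=0}^{k} |b_n| \, |s_k(\varphi^n)| \leq \sum_{n=0}^{k} |b_n| \, |z|^n = \mathcal{M}_r\big(s_k(h)\big),\]
which is the claim.

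There is essentially no serious obstacle in this argument; the only step that is not entirely formulaic is the truncation observation that $s_k(\varphi^n) = 0$ for $n > k$, which is what prevents the right-hand side from spilling beyond the $k$-th section of $h$. The structure parallels the proof of Theorem \ref{thm:sub}, with the Rogosinski-type section bound of Lemma \ref{lem:RogosinskiSchwarz} (valid up to radius $1/2$) taking the role played there by the Bohr-type bound of Lemma \ref{lem:BohrSchwarz} (valid only up to radius $1/3$).
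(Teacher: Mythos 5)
Your proposal is correct and follows essentially the same route as the paper's proof: expand $f = h(\varphi) = \sum_{n=0}^{\infty} b_n \varphi^n$, observe that $\varphi(0)=0$ truncates the sum to $s_k(f) = \sum_{n=0}^{k} b_n s_k(\varphi^n)$, and apply the triangle inequality together with Lemma \ref{lem:RogosinskiSchwarz}. In fact you spell out the truncation justification more explicitly than the paper does.
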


\begin{proof}
Since $f \prec h$, it follows that $f(z) = h\big( \varphi(z)\big) = \sum_{n=0}^{\infty} b_n \varphi^n(z)$ for some Schwarz function $\varphi$.  Then
\begin{equation*}
s_k\big( f\big) = \sum_{n=0}^{k} b_n s_k \big( \varphi^n \big).
\end{equation*}
By Lemma \ref{lem:RogosinskiSchwarz},
\[| s_k\big( f\big)| \leq \sum_{n=0}^{k} |b_n| | s_k \big( \varphi^n \big)| \leq \sum_{n=0}^{k} |b_n| |z|^n = \mathcal{M}_r \big( s_k(h) \big)
\]
whenever $|z| \leq 1/2$.
\end{proof}


\bigskip
Note that it follows easily from Theorem \ref{thm:sub} that whenever $f \prec h$, then
\[\mathcal{M}_r \left( s_k (f) \right) \leq \mathcal{M}_r (f) \leq \mathcal{M}_r (h)\]
for $0 \leq r < 1/3$. Indeed, we show next that $\mathcal{M}_r \left( s_k (f) \right) \leq \mathcal{M}_r \left( s_k (h) \right).$


\begin{theorem}\label{thm:RogosinskiSub2}
If $f \prec h$ in $\mathbb{D},$ then each section satisfies
\[\mathcal{M}_{r}\left( s_{k}(f) \right) \leq \mathcal{M}_{r}(s_{k}(h))\]
for $0 \leq r \leq 1/3$.
\end{theorem}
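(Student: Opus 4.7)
The plan is to mirror the proof of Theorem \ref{thm:RogosinskiSub} essentially verbatim, but replace the modulus by $\mathcal{M}_r$ throughout and invoke the second (rather than the first) conclusion of Lemma \ref{lem:RogosinskiSchwarz}. This automatically shrinks the range of validity from $[0,1/2]$ to $[0,1/3]$, which matches the claim.

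First, since $f \prec h$, I would write $f(z) = h(\varphi(z)) = \sum_{n=0}^{\infty} b_n \varphi(z)^n$ for some Schwarz function $\varphi$, where $h(z) = \sum_{n=0}^{\infty} b_n z^n$. Because $\varphi(0)=0$ forces $\varphi(z)^n = O(z^n)$ as $z \to 0$, only the terms with $n \leq k$ can contribute a nonzero polynomial of degree at most $k$, so linearity of the section map gives
\[s_k(f) = \sum_{n=0}^{k} b_n\, s_k(\varphi^n).\]

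Next, applying parts (ii) and (iii) of Theorem \ref{Prop:Operator} to the finite sum above yields
\[\mathcal{M}_r(s_k(f)) \leq \sum_{n=0}^{k} |b_n|\, \mathcal{M}_r\bigl(s_k(\varphi^n)\bigr).\]
The substantive ingredient is then to bound $\mathcal{M}_r(s_k(\varphi^n))$, and this is precisely the second assertion of Lemma \ref{lem:RogosinskiSchwarz}: for $0 \leq r \leq 1/3$ one has $\mathcal{M}_r(s_k(\varphi^n)) \leq r^n$. Substituting gives
\[\mathcal{M}_r(s_k(f)) \leq \sum_{n=0}^{k} |b_n|\, r^n = \mathcal{M}_r(s_k(h)),\]
which is the desired inequality.

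The only mildly subtle point is justifying the interchange of $s_k$ with the infinite sum representing $h \circ \varphi$, but this is immediate since each $\varphi^n$ vanishes to order at least $n$ at the origin, truncating the sum at $n=k$. I do not expect a serious obstacle here — the proof is a clean repackaging of Theorem \ref{thm:RogosinskiSub}'s argument using the Bohr half of Lemma \ref{lem:RogosinskiSchwarz} in place of the Rogosinski half. Indeed, the remark preceding the theorem already hints that this is the natural sharpening of the weaker chain $\mathcal{M}_r(s_k(f)) \leq \mathcal{M}_r(f) \leq \mathcal{M}_r(h)$ one obtains by combining the trivial estimate $\mathcal{M}_r(s_k(f)) \leq \mathcal{M}_r(f)$ with Theorem \ref{thm:sub}.
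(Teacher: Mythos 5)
Your proposal is correct and follows essentially the same route as the paper's own proof: decompose $s_k(f)=\sum_{n=0}^k b_n s_k(\varphi^n)$, apply subadditivity and homogeneity of $\mathcal{M}_r$, and invoke the second assertion of Lemma \ref{lem:RogosinskiSchwarz} to get $\mathcal{M}_r(s_k(\varphi^n))\leq r^n$ for $0\leq r\leq 1/3$. Your added justification of the truncation (each $\varphi^n$ vanishes to order at least $n$ at the origin) is a point the paper leaves implicit.
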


\begin{proof}
Let  $f(z) = h\big( \varphi(z)\big) = \sum_{n=0}^{\infty} b_n \varphi^n(z)$ for some Schwarz function $\varphi$. Then
\begin{equation*}
s_k\big( f\big) = \sum_{n=0}^{k} b_n s_k \big( \varphi^n \big). 
\end{equation*}
By the subadditivity of $\mathcal{M}_r$ and Lemma \ref{lem:RogosinskiSchwarz},
\[
\mathcal{M}_r \left( s_k(f)\right) \leq \sum_{n=0}^{k} |b_n| \mathcal{M}_r \left( s_k \left( \varphi^n\right)\right) \leq \sum_{n=0}^{k} |b_n||z|^n = \mathcal{M}_r \big( s_k(h)\big). \qedhere
\]
\end{proof}

%

\begin{corollary}
If $f(z) = \sum_{n=0}^{\infty} a_{n}z^{n} \prec h(z) = \sum_{n=0}^{\infty} b_{n}z^{n},$ and $h$ is univalent in $\mathbb{D}$, then
\[\left \vert \sum_{n=0}^{k} a_{n} z^{n} \right \vert \leq \left| b_0\right| + \frac{k(k+1)}{2}\left|b_1\right|\]
for $|z| \leq 1/2$, and
\[\sum_{n=0}^{k} \left\vert a_{n}\right\vert |z|^{n} \leq \left| b_0\right| + \frac{k(k+1)}{2}\left|b_1\right|\]
for $|z|\leq 1/3$.
\end{corollary}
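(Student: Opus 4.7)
My plan is to combine the section estimates of Theorems~\ref{thm:RogosinskiSub} and~\ref{thm:RogosinskiSub2} with the classical de~Branges coefficient estimate for univalent functions. Recall that if $h(z)=b_0+b_1 z+b_2 z^2+\cdots$ is univalent on $\mathbb{D}$, then $b_1=h'(0)\neq 0$ and de~Branges' theorem (formerly the Bieberbach conjecture) asserts $|b_n|\leq n|b_1|$ for every $n\geq 1$. Both inequalities then fall out by a one-line majorization, with no further subordination input needed beyond what has already been developed.

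For the first inequality, I would begin by invoking Theorem~\ref{thm:RogosinskiSub} to obtain
\[
|s_k(f)| \;\leq\; \mathcal{M}_r\bigl(s_k(h)\bigr) \;=\; \sum_{n=0}^{k}|b_n|\,|z|^n, \qquad |z|\leq 1/2.
\]
Since $|z|\leq 1/2<1$ gives $|z|^n\leq 1$ for all $n\geq 0$, the right-hand side is dominated by $|b_0|+\sum_{n=1}^{k}|b_n|$. Substituting de~Branges' bound $|b_n|\leq n|b_1|$ and summing the arithmetic series $\sum_{n=1}^{k}n=k(k+1)/2$ yields the claimed estimate.

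For the second inequality, I would run the identical argument with Theorem~\ref{thm:RogosinskiSub2} in place of Theorem~\ref{thm:RogosinskiSub}, which shrinks the admissible radius to $|z|\leq 1/3$ but is otherwise parallel: one obtains $\mathcal{M}_r(s_k(f))\leq\sum_{n=0}^{k}|b_n|\,|z|^n$, then applies $|z|^n\leq 1$ and de~Branges exactly as before.

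The only nontrivial external input is de~Branges' coefficient theorem; beyond that the argument is pure bookkeeping, so I do not anticipate a real obstacle. If one wished for a sharper but less tidy statement, one could retain the factors $|z|^n$ and evaluate the resulting polynomial in $r$ at $r=1/2$ or $r=1/3$, but the corollary as stated deliberately uses the cleaner, $z$-independent bound.
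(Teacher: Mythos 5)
Your proposal is correct and follows essentially the same route as the paper: bound $\sum_{n=0}^{k}|b_n|\,|z|^n$ by $|b_0|+|b_1|\sum_{n=1}^{k}n$ via de~Branges' inequality $|b_n|\leq n|b_1|$ (applied to the normalized function $(h-b_0)/b_1$) together with $|z|^n\leq 1$, and then invoke Theorem~\ref{thm:RogosinskiSub} for the radius $1/2$ statement and Theorem~\ref{thm:RogosinskiSub2} for the radius $1/3$ statement. No substantive differences from the paper's argument.
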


\begin{proof}
By de Branges Theorem \cite{deBranges}, it follows that $\left\vert b_{n}\right\vert \leq n\left|b_1\right|$ for $n\in\mathbb{N}$. Hence
\[\sum_{n=0}^{k} \left| b_n\right| |z|^n \leq \left| b_0\right| + \left|b_1\right|\sum_{n=1}^{k} n = \left| b_0\right| + \frac{k(k+1)}{2}\left|b_1\right|.\]
The result now readily follows from Theorem \ref{thm:RogosinskiSub} and Theorem \ref{thm:RogosinskiSub2}.
\end{proof}

\bigskip
\subsection*{\textbf{Acknowledgment.}} This work was initiated during the visit of the first author to USM. The second and third authors gratefully acknowledge support from USM research university grants 1001.PMATHS.8011101 and 1001.PMATHS.8011038.

\end{document}